\newtheorem{theorem}{Theorem}
\newtheorem{proposition}[theorem]{Proposition}
\newtheorem{lemma}[theorem]{Lemma}
\theoremstyle{definition}
\newcommand{\N}{\mathbb{N}}
\newcommand{\R}{\mathbb{R}}
\newcommand{\E}{\operatorname{\mathsf{E}}}
\newcommand{\ii}[1]{\operatorname{I}\{#1\}}
\newcommand{\iib}[1]{\operatorname{I}\Big\{#1\Big\}}
\newcommand{\vp}{\varepsilon}
\newcommand{\tf}{{\tilde{f}}}
\newcommand{\tg}{{\tilde{g}}}
\begin{document}

\title[A problem concerning Riemann sums]{A problem concerning Riemann sums}


\author{Iosif Pinelis}

\address{Iosif Pinelis, Department of Mathematical Sciences\\
Michigan Technological University\\
Hough\-ton, Michigan 49931, USA\\
\email{ipinelis@mtu.edu}}

\CorrespondingAuthor{Iosif Pinelis}


\date{
09.09.2018}                               

\keywords{Riemann sums; probabilistic method; Wiener process; Brownian motion; integrals;
sequences; limits; continuity; absolute continuity; fractional part integrals}

\subjclass{26A06, 26A27, 26A42, 26A46, 60Exx, 60G15}


\begin{abstract}
        An open problem concerning Riemann sums, posed by O.\ Furdui, is considered. 
\end{abstract}

\maketitle



Let $f\colon[0,1]\to\R$ be a continuous function. For natural $n$, let 
\begin{equation}\label{eq:x,y}
	x_n:=\sum_{k=1}^{n-1}f\Big(\frac kn\Big)\quad\text{and}\quad y_n:=x_{n+1}-x_n. 
\end{equation}
One may note here that $x_n/n$ is a Riemann sum approximating the integral $\int_0^1 f(x)\,dx$. 

Part (a) of Problem~1.32 in the book \cite{furdui} is to find $\lim_{n\to\infty}y_n$ 
if the function $f$ is continuously differentiable. It is not hard to do a bit more: 

\begin{proposition}\label{prop:abs cont}
Whenever $f$ is absolutely continuous, one has 
\begin{equation}\label{eq:}
	\lim_{n\to\infty}y_n=\int_0^1 f(x)\,dx. 
\end{equation}

\end{proposition}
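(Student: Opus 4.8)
The plan is to show that $x_n-nI$ converges as $n\to\infty$, where $I:=\int_0^1 f(x)\,dx$; this suffices, because setting $a_n:=x_n-nI$ gives $y_n=x_{n+1}-x_n=I+(a_{n+1}-a_n)$, and the convergence of $a_n$ forces $a_{n+1}-a_n\to0$, whence $y_n\to I$.

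To analyze $a_n$, I would first pass to the right-endpoint Riemann sum $R_n:=\frac1n\sum_{k=1}^n f(k/n)$, so that $x_n=nR_n-f(1)$ and $a_n=n(R_n-I)-f(1)$. Writing $I=\sum_{k=1}^n\int_{(k-1)/n}^{k/n}f$ and $\frac1n f(k/n)=\int_{(k-1)/n}^{k/n}f(k/n)\,dx$ gives
\[
	R_n-I=\sum_{k=1}^n\int_{(k-1)/n}^{k/n}\big(f(k/n)-f(x)\big)\,dx.
\]
Here absolute continuity enters: $f(k/n)-f(x)=\int_x^{k/n}f'(t)\,dt$ with $f'\in L^1[0,1]$. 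Substituting this and switching the order of integration over each triangle turns the inner integral into $\int_{(k-1)/n}^{k/n}f'(t)\,(t-(k-1)/n)\,dt$. Since $t-(k-1)/n=\frac1n\{nt\}$ (the fractional part of $nt$) on $[(k-1)/n,k/n)$, summing over $k$ collapses everything to the single integral
\[
	n(R_n-I)=\int_0^1 f'(t)\,\{nt\}\,dt.
\]

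The crux is then the limit $\int_0^1 f'(t)\{nt\}\,dt\to\frac12\int_0^1 f'(t)\,dt=\frac{f(1)-f(0)}{2}$, i.e.\ the weak-$*$ convergence of $\{n\,\cdot\,\}$ to its mean $\tfrac12$ tested against the $L^1$ function $f'$. I expect this to be the main obstacle, so I would isolate it as a lemma: for every $g\in L^1[0,1]$, $\int_0^1 g(t)\{nt\}\,dt\to\frac12\int_0^1 g$. The functionals $g\mapsto\int_0^1 g(t)(\{nt\}-\tfrac12)\,dt$ are uniformly bounded on $L^1$ (by $\tfrac12\|g\|_1$, since $0\le\{nt\}\le1$), and for an indicator of an interval the claim reduces to $\int_a^b(\{nt\}-\tfrac12)\,dt\to0$, which holds because the integrand is periodic of mean zero, so the integral is $O(1/n)$. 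By linearity this extends to step functions, and a standard $\varepsilon$-approximation of $g$ by step functions in $L^1$-norm, combined with the uniform bound, upgrades it to all of $L^1$, in particular to $g=f'$.

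Combining the pieces yields $a_n=n(R_n-I)-f(1)\to\frac{f(1)-f(0)}{2}-f(1)=-\frac{f(0)+f(1)}{2}$. Since $a_n$ converges, $a_{n+1}-a_n\to0$, and therefore $y_n\to I=\int_0^1 f(x)\,dx$, as claimed.
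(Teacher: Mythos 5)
Your proof is correct, and it takes a genuinely different route from the paper's. The paper works directly with the difference $y_n=x_{n+1}-x_n$: writing $f(x)=f(0)+\int_0^x g(u)\,du$ with $g\in L^1[0,1]$, it obtains $y_n=f(0)+I_n(g)-J_n(g)$, where $J_n(g)=\int_0^1 g(u)h_n(u)\,du$ and $h_n(u)=\sum_{k=1}^{n-1}\operatorname{I}\{k/(n+1)\le u<k/n\}$ is a kernel encoding the interleaving of the partitions $\{k/n\}_k$ and $\{k/(n+1)\}_k$; the crux there is that $h_n$ converges, tested against $L^1$ functions, to the function $u\mapsto u$, proved by approximating $g$ in $L^1$-norm by a \emph{continuous} function. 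You instead center the partial sums: you prove the stronger statement that $x_n-n\int_0^1 f$ converges (to $-\tfrac12(f(0)+f(1))$, the trapezoidal/Euler--Maclaurin correction), via the identity $n(R_n-I)=\int_0^1 f'(t)\{nt\}\,dt$ and the weak$^*$ convergence of $\{n\,\cdot\,\}$ to its mean $\tfrac12$ against $L^1$ test functions, proved by approximating by \emph{step} functions; the conclusion for $y_n$ then follows by telescoping, since a convergent sequence has differences tending to $0$. Both arguments share the same skeleton---absolute continuity converts the problem into a weak-convergence statement against an $L^1$ density, settled by an $\vp$-approximation plus a uniform bound---but your decomposition buys more (the precise first-order asymptotics of the Riemann sums $x_n$, of independent interest), whereas the paper's kernel $h_n$ is tailored exactly to the difference $y_n$ and never needs the sharper expansion. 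One cosmetic point worth noting in a write-up: the identity $t-(k-1)/n=\frac1n\{nt\}$ fails at the endpoints $t=k/n$, but these form a Lebesgue-null set, so the substitution under the integral is legitimate.
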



On the other hand, it is even easier to show this: 

\begin{proposition}\label{prop:uniq}
Whenever $\lim_{n\to\infty}y_n$ exists,  
equality \eqref{eq:} holds. 
\end{proposition}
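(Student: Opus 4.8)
The plan is to exploit the telescoping structure of the differences $y_n=x_{n+1}-x_n$ together with the Cesàro averaging theorem. Write $L:=\lim_{n\to\infty}y_n$, which exists by hypothesis; the goal is to identify $L$ with $\int_0^1 f(x)\,dx$.

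First I would compute the partial sums of the sequence $(y_n)$. Since $y_n=x_{n+1}-x_n$ telescopes and $x_1=\sum_{k=1}^{0}f(k/1)=0$ (the empty sum), one has $\sum_{n=1}^N y_n=x_{N+1}$, so that the arithmetic means of the $y_n$ are exactly
\begin{equation*}
\frac1N\sum_{n=1}^N y_n=\frac{x_{N+1}}{N}.
\end{equation*}
On the one hand, the Cesàro theorem guarantees that these means converge to the same limit $L$ as the sequence $(y_n)$ itself. On the other hand, $x_{N+1}/N=\frac{N+1}{N}\cdot\frac{x_{N+1}}{N+1}$, and since $f$ is continuous the Riemann sums $x_m/m=\frac1m\sum_{k=1}^{m-1}f(k/m)$ converge to $\int_0^1 f(x)\,dx$ (the two omitted endpoint terms contribute $O(1/m)$ and vanish in the limit). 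As $\frac{N+1}{N}\to1$, it follows that $x_{N+1}/N\to\int_0^1 f(x)\,dx$ as well.

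Comparing the two evaluations of $\lim_N\frac1N\sum_{n=1}^N y_n$ then forces $L=\int_0^1 f(x)\,dx$, which is the desired equality \eqref{eq:}. The argument uses nothing about $f$ beyond continuity, precisely what makes $x_m/m$ a convergent Riemann sum. There is essentially no hard step: the only point requiring a moment's care is the bookkeeping that makes the mean of $(y_n)$ equal to $x_{N+1}/N$ rather than to $x_N/N$, but this is immediate once the empty-sum convention $x_1=0$ is noted. This is why the claim is, as stated, even easier than Proposition~\ref{prop:abs cont}.
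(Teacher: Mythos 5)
Your proof is correct and is essentially the paper's own argument: taking Ces\`aro means of the telescoping differences $y_n=x_{n+1}-x_n$ is exactly the Stolz--Ces\`aro theorem in the special case $b_n=n$, which the paper applies directly with $a_n=x_n$, $b_n=n$, combined with the same key observation that the Riemann sums $x_n/n$ converge to $\int_0^1 f(x)\,dx$ by continuity of $f$. The only cosmetic difference is that you re-derive this special case of Stolz--Ces\`aro (via telescoping plus the Ces\`aro averaging theorem) instead of citing it.
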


Propositions \ref{prop:abs cont} and \ref{prop:uniq} will be proved at the end of this note. 

\bigskip

Part (b) of 
Problem~1.32 in \cite{furdui} is the following 
question, which has so far remained apparently unanswered: 
\begin{quote}
\centering
{\normalsize \emph{What is the limit \emph{[in \eqref{eq:}]} when $f$ is only continuous?}}
\end{quote}
By Proposition \ref{prop:uniq}, this limit, if it exists, may only be $\int_0^1 f(x)\,dx$. However, we have 

\begin{theorem}\label{th:}
There are continuous functions $f\colon[0,1]\to\R$\ \, for which $\lim_{n\to\infty}y_n$ does not exist. 
\end{theorem}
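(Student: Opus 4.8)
The plan is to use the probabilistic method: I would take $f$ to be a sample path of a standard Wiener process (Brownian motion) $W$ on $[0,1]$, which is almost surely continuous, and show that with positive probability the sequence $y_n$ fails to converge. Since, by Proposition~\ref{prop:uniq}, the only possible limit is $I:=\int_0^1 W_t\,dt$, it suffices to exhibit a set of positive probability on which $y_n\not\to I$.

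First I would record a convenient algebraic form of $y_n$. Recalling from \eqref{eq:x,y} that $y_n=x_{n+1}-x_n$ and that the endpoint contributions $f(1)$ cancel, one gets
\begin{equation*}
	y_n=\sum_{k=1}^{n}W_{k/(n+1)}-\sum_{k=1}^{n-1}W_{k/n}
	=W_{n/(n+1)}-\sum_{k=1}^{n-1}\Delta_k,
	\qquad \Delta_k:=W_{k/n}-W_{k/(n+1)}.
\end{equation*}
The key structural observation is that the intervals $\big(\tfrac{k}{n+1},\tfrac kn\big)$, $k=1,\dots,n-1$, are pairwise disjoint (indeed $\tfrac{k+1}{n+1}>\tfrac kn\iff n>k$), so the increments $\Delta_k$ are \emph{independent}. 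Hence
\begin{equation*}
	\Var\Big(\sum_{k=1}^{n-1}\Delta_k\Big)=\sum_{k=1}^{n-1}\Big(\frac kn-\frac k{n+1}\Big)
	=\sum_{k=1}^{n-1}\frac{k}{n(n+1)}=\frac{n-1}{2(n+1)}\xrightarrow[n\to\infty]{}\tfrac12 .
\end{equation*}

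Next I would rule out convergence in $L^2$. Since $W_{n/(n+1)}\to W_1$ in $L^2$ and $\sum_{k}\Delta_k=W_{n/(n+1)}-y_n$, convergence $y_n\to I$ in $L^2$ would force $\sum_{k}\Delta_k\to W_1-I$ in $L^2$, whence $\Var(\sum_{k}\Delta_k)\to\Var(W_1-I)$. A direct covariance computation gives $\Var(W_1-I)=1-2\Cov(W_1,I)+\Var(I)=1-2\cdot\tfrac12+\tfrac13=\tfrac13\neq\tfrac12$, a contradiction. Thus $y_n\not\to I$ in $L^2$.

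Finally I would upgrade this to almost-sure non-convergence and extract a deterministic example. The variables $Z_n:=y_n-I$ are jointly Gaussian with mean $0$, and for such a sequence almost-sure convergence to $0$ would imply convergence in probability, hence convergence in $L^2$; so the $L^2$-failure gives $\P(y_n\not\to I)>0$. On this event, by Proposition~\ref{prop:uniq} the limit $\lim_n y_n$ cannot exist, for if it did it would equal $I$. As the event has positive probability it is nonempty, so at least one continuous path $f=W(\cdot,\omega)$ furnishes the desired example. I expect the genuine content to be the structural step in the second paragraph—recognizing that $y_n$ reduces to a sum of independent Brownian increments over disjoint intervals whose lengths total $\tfrac12$ rather than the value $\tfrac13$ that an $L^2$-limit $I$ would demand; once this mismatch is in hand, the Gaussian upgrade to almost-sure non-convergence and the appeal to Proposition~\ref{prop:uniq} are routine, modulo the standard fact that a centered Gaussian sequence converging in probability also converges in $L^2$.
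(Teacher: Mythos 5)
Your proof is correct, but its key step is genuinely different from the paper's, even though both rest on the same probabilistic method (taking $f$ to be a Brownian path and arguing by contradiction). The paper's core is a distributional statement proved by direct computation: four rather involved double sums for $\E x_{4s}x_{2s}$, $\E x_{4s+1}x_{2s+1}$, $\E x_{4s+1}x_{2s}$, $\E x_{4s}x_{2s+1}$ yield $\E(y_{4s}-y_{2s})^2\to\frac14$, so that $y_{4s}-y_{2s}$ tends in distribution to the centered normal law with variance $\frac14$ rather than to $0$; this requires no identification of the putative limit, so Proposition~\ref{prop:uniq} plays no role in it. You instead use Proposition~\ref{prop:uniq} to pin down the only possible limit $I=\int_0^1 W_t\,dt$, and you rule out even $L^2$-convergence to $I$ via the structural observation that $y_n=W_{n/(n+1)}-\sum_{k=1}^{n-1}\Delta_k$, where the $\Delta_k$ are independent increments over the pairwise disjoint intervals $\big(\frac k{n+1},\frac kn\big)$ of total length $\frac{n-1}{2(n+1)}\to\frac12$, whereas an $L^2$-limit would force this variance to tend to $\Var(W_1-I)=1-2\cdot\frac12+\frac13=\frac13$. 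Your Gaussian upgrade is also sound: each $y_n-I$ is centered Gaussian, and for centered Gaussian variables convergence in probability (hence almost-sure convergence) forces the variances, i.e.\ the squared $L^2$-norms, to tend to $0$; so $\P(y_n\not\to I)>0$, and on that event (intersected with the full-probability event of path continuity) Proposition~\ref{prop:uniq} shows that $\lim_n y_n$ cannot exist at all. The trade-off: your route replaces the paper's heavy bookkeeping with a one-line variance sum made possible by the independence of the $\Delta_k$, at the cost of invoking Proposition~\ref{prop:uniq} inside the proof of the theorem; the paper's route is independent of that proposition and yields a sharper quantitative conclusion, namely the exact limiting distribution of $y_{4s}-y_{2s}$. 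One cosmetic slip: your remark that ``the endpoint contributions $f(1)$ cancel'' is vacuous, since neither $x_n$ nor $x_{n+1}$ contains an $f(1)$ term (the sums stop at $k=n-1$ and $k=n$, respectively); the displayed identity you write is nonetheless correct. Like the paper's proof, yours is a pure existence argument and is non-constructive.
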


This is a pure existence theorem, and its proof, given below, is non-constructive. 
%
So, the problem of explicitly constructing a continuous function for which $\lim_{n\to\infty}y_n$ does not exist remains open. 

\begin{proof}[Proof of Theorem~\ref{th:}]
It may 
come as a surprise that this proof uses a probabilistic method.  
Let $f$ be the random function $W$ that is a standard Wiener process (Brownian motion) over the interval $[0,1]$. Then the 
probability that $f$ is continuous everywhere on $[0,1]$ is $1$; see e.g.\ \cite{morters-peres}. In fact, without loss of generality we may assume that all realizations of the random function $f=W$ are everywhere continuous. 

Informally, the idea of this proof of Theorem~\ref{th:} is that, while all realizations of $W$ are everywhere continuous, they are rather non-smooth, not only in the sense of being nowhere differentiable, but also not being H\"older continuous with any exponent $\ge1/2$, in view of the (local) law of the iterated logarithm \cite{morters-peres}. 

Now back to the formal proof: actually, Theorem~\ref{th:} follows 
immediately from 

\begin{lemma}\label{lem:} For $f=W$, the distribution of the random variable $y_{4s}-y_{2s}$ converges to the centered normal distribution with variance $1/4$. 
\end{lemma}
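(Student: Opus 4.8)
The plan is to use that $y_{4s}-y_{2s}$ is a \emph{centered Gaussian} random variable: being a finite linear combination of the values $W(k/n)$, it is a linear functional of the Wiener process, hence Gaussian with mean $0$. Since a centered Gaussian law is determined by its variance, and since centered Gaussians whose variances converge to $\sigma^2$ converge in distribution to the $N(0,\sigma^2)$ law, the whole lemma reduces to the single deterministic assertion $\Var(y_{4s}-y_{2s})\to 1/4$.

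First I would recast $y_n$ as a Wiener (It\^o) integral with deterministic integrand. Writing $W(k/n)=\int_0^1\ii{u\le k/n}\,dW(u)$ and summing gives $x_n=\int_0^1\bigl(n-\lceil nu\rceil\bigr)\,dW(u)$. Setting $I:=\int_0^1 W(t)\,dt=\int_0^1(1-u)\,dW(u)$ and $U_n:=nI-x_n$, one gets the clean identities
\begin{equation*}
U_n=\int_0^1\bigl(1-\{nu\}\bigr)\,dW(u),\qquad y_n=I+U_n-U_{n+1},
\end{equation*}
where $\{nu\}$ denotes the fractional part. The term $I$ is independent of $n$, so it cancels in the difference, leaving $y_{4s}-y_{2s}=U_{4s}-U_{4s+1}-U_{2s}+U_{2s+1}=\int_0^1\phi_s\,dW$. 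The integrand then simplifies sharply: using $\{(m+1)u\}-\{mu\}=u-\ii{\{mu\}\ge 1-u}$ for $u\in(0,1)$, the sawtooth pieces collapse and the $u$-terms cancel between the two scales, yielding
\begin{equation*}
\phi_s(u)=\ii{\{2su\}\ge 1-u}-\ii{\{4su\}\ge 1-u}.
\end{equation*}

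By the It\^o isometry, $\Var(y_{4s}-y_{2s})=\int_0^1\phi_s^2=|A_s\triangle B_s|$, where $A_s=\{u:\{2su\}\ge 1-u\}$ and $B_s=\{u:\{4su\}\ge 1-u\}$. Splitting $[0,1]$ into the grid intervals of length $1/m$ shows $\int_0^1\ii{\{mu\}\ge 1-u}\,du=\tfrac12$ \emph{exactly} for every $m$, so $|A_s|=|B_s|=\tfrac12$ and everything reduces to the overlap, $\Var(y_{4s}-y_{2s})=1-2\,|A_s\cap B_s|$. To evaluate the overlap I would use $4su=2\cdot 2su$: writing $w=2su$ and $\{2w\}=2\{w\}-\ii{\{w\}\ge 1/2}$, one checks that the joint event $\{w\}\ge 1-u,\ \{2w\}\ge 1-u$ equals $\{w\}\in[1-u,\tfrac12)\cup[1-\tfrac u2,1)$, a set of total length $\ell(u)=(u-\tfrac12)^+ + \tfrac u2$.

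The final, and main, step is the equidistribution limit
\begin{equation*}
|A_s\cap B_s|=\int_0^1\ii{\{2su\}\in[1-u,\tfrac12)\cup[1-\tfrac u2,1)}\,du\longrightarrow\int_0^1\ell(u)\,du=\tfrac38,
\end{equation*}
giving $\Var(y_{4s}-y_{2s})\to 1-\tfrac34=\tfrac14$. This limit is the hard part: one must show that $\{2su\}$ becomes asymptotically equidistributed and asymptotically independent of the slowly varying threshold $1-u$. I expect to justify it either by a Riemann-sum argument over the intervals $[i/(2s),(i+1)/(2s))$, on each of which $\{2su\}$ sweeps $[0,1)$ while $u$ varies only by $O(1/s)$, or by expanding the indicator in a Fourier series in $\{2su\}$ and invoking the Riemann--Lebesgue lemma to annihilate the nonzero frequencies. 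The algebraic reductions preceding it are exact; all the care is concentrated in this convergence.
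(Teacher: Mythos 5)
Your proposal is correct, and it takes a genuinely different route from the paper's. The paper stays entirely with the covariance kernel $\E W(u)W(v)=u\wedge v$ and evaluates four exact double sums $\sum_j\sum_k\big(\frac jn\wedge\frac km\big)$ for the index pairs $(4s,2s)$, $(4s+1,2s+1)$, $(4s+1,2s)$, $(4s,2s+1)$, obtaining closed rational expressions; combined with the exact identity $\E y_n^2=1/2$ these give $\E(y_{4s}-y_{2s})^2\to \frac12+\frac12-2\cdot\frac{12}{32}=\frac14$. You instead represent $x_n$, hence $y_{4s}-y_{2s}$, as Wiener integrals with deterministic integrands, so the It\^o isometry turns the variance into the Lebesgue measure of the symmetric difference of the sawtooth sets $A_s=\{u:\{2su\}\ge 1-u\}$ and $B_s=\{u:\{4su\}\ge 1-u\}$. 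I checked your algebraic reductions and they are all exact: $U_n=\int_0^1(1-\{nu\})\,dW$ a.e., the identity $\{(m+1)u\}-\{mu\}=u-\ii{\{mu\}\ge 1-u}$, the exact values $|A_s|=|B_s|=\frac12$, and the overlap description via $\{2w\}=2\{w\}-\ii{\{w\}\ge 1/2}$ giving $\ell(u)=(u-\tfrac12)^++\tfrac u2$. Your limit $|A_s\cap B_s|\to\frac38$ is precisely the counterpart of the paper's $\E y_{4s}y_{2s}\to\frac{12}{32}$. What your route buys: it explains structurally where $1/4$ comes from, makes the role of the doubled index transparent through the doubling identity, yields as a byproduct the clean formula $y_n=\int_0^1\ii{\{nu\}<1-u}\,dW(u)$ (from which the paper's $\E y_n^2=1/2$ is immediate), and would generalize painlessly to other index pairs. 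What the paper's route buys: it is completely elementary---finite sums and the covariance formula, no stochastic integration---and every step before the final limit is an exact identity. The one step you leave as a sketch, the equidistribution limit, is in fact routine and your first suggested method closes it: substituting $u=(i+t)/(2s)$ on $[\frac{i}{2s},\frac{i+1}{2s})$ makes $\{2su\}=t$, and since the thresholds $1-u$ and $1-u/2$ vary by only $O(1/s)$ across the interval, the $i$-th contribution is $\frac{1}{2s}\ell\big(\frac{i}{2s}\big)+O(1/s^2)$; summing, $|A_s\cap B_s|$ is a Riemann sum of the continuous function $\ell$ up to $O(1/s)$, so it converges to $\int_0^1\ell(u)\,du=\frac38$ and $\Var(y_{4s}-y_{2s})\to 1-\frac34=\frac14$. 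So the proposal is sound; it only needs that last paragraph written out in full.
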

\noindent\rule{0pt}{0pt}\big(The convergence here and in the rest of the proof of Theorem~\ref{th:} is as $\N\ni s\to\infty$.\big) 
Indeed, if Theorem~\ref{th:} were false, we would have $y_{4s}-y_{2s}\to0$ almost surely and hence in distribution, which would contradict Lemma~\ref{lem:}. 
So, to complete the proof of Theorem~\ref{th:}, it remains to prove the lemma. 

\begin{proof}[Proof of Lemma~\ref{lem:}] Since $y_{4s}-y_{2s}$ is a centered normal random variable, it suffices to show that 
\begin{equation}\label{eq:to1/4}
	\E(y_{4s}-y_{2s})^2\overset{\text{(?)}}
	\longrightarrow
	1/4. 
\end{equation}
The proof of \eqref{eq:to1/4} consists in direct calculations, which are somewhat involved, though, as we have to deal carefully enough with the discreteness in the definition of $x_n$. In carrying out this task, the choice of indices, $4s$ and $2s$, in the statement of Lemma~\ref{lem:} turns out to be sufficiently convenient.  
 
The just mentioned calculations are based on the formula 
\begin{equation*}
\E W(u)W(v)=u\wedge v	
\end{equation*}
for all $u,v$ in $[0,1]$. By \eqref{eq:x,y}, for $f=W$, 
\begin{equation}\label{eq:Ex_n^2}
	\E x_n^2=\sum_{j,k=1}^{n-1}\E W\Big(\frac jn\Big)W\Big(\frac kn\Big)
	=
	\sum_{j,k=1}^{n-1}\Big(\frac jn\wedge \frac kn\Big)=\frac{2 n^2-3 n+1}6. 
\end{equation}
Somewhat similarly,
\begin{equation}\label{eq:Ex_n x_{n+1}}
\begin{aligned}
		\E x_n x_{n+1}
	&=\sum_{j=1}^{n-1}\sum_{k=1}^n\Big(\frac jn\wedge \frac k{n+1}\Big) \\ 
	&=\sum_{j=1}^{n-1}\sum_{k=j+1}^n\frac jn 
	+\sum_{j=1}^{n-1}\sum_{k=1}^j\frac k{n+1}  
	=\frac{2 n^2-n-1}6. 
\end{aligned}	
\end{equation}
It follows from \eqref{eq:x,y}, \eqref{eq:Ex_n^2}, and \eqref{eq:Ex_n x_{n+1}} that 
\begin{equation}\label{eq:1/2}
	\E y_n^2=\E x_{n+1}^2+\E x_n^2-2\E x_n x_{n+1}=1/2. 
\end{equation}

Now take any natural $s$. Similarly to \eqref{eq:Ex_n x_{n+1}}, we have 
\begin{equation*}
\begin{aligned}
	\E x_{4s} x_{2s}
	&=\sum_{j=1}^{4s-1}\sum_{k=1}^{2s-1}\Big(\frac j{4s}\wedge \frac k{2s}\Big) \\ 
	&=\sum_{k=1}^{2s-1}\sum_{j=1}^{2k}\frac j{4s}
	+\sum_{k=1}^{2s-1}\sum_{j=2k+1}^{4s-1}\frac k{2s}  
	=\frac{32 s^2-18 s+1}{12}, 	\\ 	
	\E x_{4s+1} x_{2s+1}
	&=\sum_{j=1}^{4s}\sum_{k=1}^{2s}\Big(\frac j{4s+1}\wedge \frac k{2s+1}\Big) \\ 
	&=\sum_{k=1}^{2s}\sum_{j=1}^{2k-1}\frac j{4s+1}
	+\sum_{k=1}^{2s}\sum_{j=2k}^{4s-1}\frac k{2s+1}  
	=\frac{32 s^3+14s^2}{12 s+3}, \\ 	
\end{aligned}	
\end{equation*}
\begin{equation*}
\begin{aligned}
	\E x_{4s+1} x_{2s}
	&=\sum_{j=1}^{4s}\sum_{k=1}^{2s-1}\Big(\frac j{4s+1}\wedge \frac k{2s}\Big) \\ 
	&=\sum_{k=1}^{2s-1}\sum_{j=1}^{2k}\frac j{4s+1}
	+\sum_{k=1}^{2s-1}\sum_{j=2k+1}^{4s}\frac k{2s}  
	=\frac{32 s^3-2 s^2-5 s-1}{12 s+3}, \\ 		
	%
	\E x_{4s} x_{2s+1}
	&=\sum_{j=1}^{4s-1}\sum_{k=1}^{2s}\Big(\frac j{4s}\wedge \frac k{2s+1}\Big) \\ 
	&=\sum_{k=1}^{s}\sum_{j=1}^{2k-1}\frac j{4s}
	+\sum_{k=1}^{s}\sum_{j=2k}^{4s-1}\frac k{2s+1} \\  
	&+\sum_{k=s+1}^{2s}\sum_{j=1}^{2k-2}\frac j{4s}
	+\sum_{k=s+1}^{2s}\sum_{j=2k-1}^{4s-1}\frac k{2s+1} 
	=\frac{64 s^3+28 s^2-7 s-1}{24 s+12}.  		
\end{aligned}	
\end{equation*}
So,
\begin{align*}
	\E y_{4s}y_{2s}&=\E x_{4s}x_{2s}+\E x_{4s+1}x_{2s+1}-\E x_{4s+1}x_{2s}-\E x_{4s}x_{2s+1} \\ 
	&=\frac{12 s^2+9 s+2}{32 s^2+24 s+4}\longrightarrow\frac{12}{32}.    
\end{align*}
Thus, in view of \eqref{eq:1/2}, 
\begin{equation*}
	\E(y_{4s}-y_{2s})^2=\E y_{4s}^2+\E y_{2s}^2-2\E y_{4s}y_{2s}\longrightarrow\frac12+\frac12-2\times\frac{12}{32}=\frac14, 
\end{equation*}
so that \eqref{eq:to1/4} is verified, which completes the proof of Lemma~\ref{lem:}. 
\end{proof}
The proof of Theorem~\ref{th:} is now complete as well. 
\end{proof}

To conclude this note, it remains to prove Propositions \ref{prop:abs cont} and \ref{prop:uniq}. 

\begin{proof}[Proof of Proposition \ref{prop:abs cont}]
Since $f$ is absolutely continuous, there is a function $g\in L^1[0,1]$ 
such that 
\begin{equation}\label{eq:f}
f(x)=f(0)+\int_0^x g(u)\,du=f(0)+\int_0^1 g(u)\ii{u<x}\,du	
\end{equation}
for all $x\in[0,1]$, where $\ii\cdot$ denotes the indicator. So, by \eqref{eq:x,y}, 
\begin{equation*}
	x_n=(n-1)f(0)+\int_0^1 g(u)\sum_{k=1}^{n-1}\iib{u<\frac kn}\,du
\end{equation*}
and hence 
\begin{equation}\label{eq:y_n=}
	y_n=f(0)+I_n(g)-J_n(g),
\end{equation}
where 
\begin{equation*}
	I_n(g):=\int_0^1 g(u)\iib{u<\frac n{n+1}}\,du,\quad J_n(g):=\int_0^1 g(u)h_n(u)\,du,
\end{equation*}
and 
\begin{equation}\label{eq:h_n}
	h_n(u):=\sum_{k=1}^{n-1}\iib{\frac k{n+1}\le u<\frac kn}.
\end{equation}
Clearly,
\begin{equation}\label{eq:I_n to}
	I_n(g)=f\Big(\frac n{n+1}\Big)-f(0)\longrightarrow f(1)-f(0). 
\end{equation}
Here and in the rest of this proof, the convergence is as $n\to\infty$. 

To deal with $J_n(g)$, take any real $\vp>0$. Since $g\in L^1[0,1]$, by \cite[Corollary~4.2.2]{bogachev}, $\int_0^1 |g(u)-\tg(u)|\,du\le\vp$ for some continuous function $\tg\colon[0,1]\to\R$. 
Note also that $0\le h_n\le1$, since $[\frac k{n+1},\frac kn)\subset[\frac{k-1}n,\frac kn)$ for $k=1,\dots,n-1$. 
So,    
\begin{equation}\label{eq:<ep}
	|J_n(g)-J_n(\tg)|\le\int_0^1 |g(u)-\tg(u)|\,du\le\vp. 
\end{equation}
Introduce now the function $\tg_n$ by the formula 
$$\tg_n(u):=\sum_{k=1}^{n-1}\tg\Big(\frac kn\Big)\iib{\frac k{n+1}\le u<\frac kn}$$ 
for $u\in[0,1]$. 
Since the function $\tg$ is continuous, it is uniformly continuous on $[0,1]$, so that, in view of \eqref{eq:h_n}, $\|\tg h_n-\tg_n h_n\|_\infty=\|\tg h_n-\tg_n\|_\infty\to0$ and hence 
\begin{equation}\label{eq:J-J}
	|J_n(\tg)-J_n(\tg_n)|\longrightarrow0. 
\end{equation}
On the other hand, using the continuity of $\tg$ and integration by parts, we have 
\begin{equation}\label{eq:J_n to}
	J_n(\tg_n)=\sum_{k=1}^{n-1}\tg\Big(\frac kn\Big)\,\frac kn\,\frac 1n\;\frac n{n+1}\longrightarrow
	\int_0^1\tg(u) u\,du
	=\tf(1)-\int_0^1\tf(u)\,du, 
\end{equation}
where $\tf(x):=f(0)+\int_0^x\tg(u)\,du$ for $x\in[0,1]$. 
By \eqref{eq:f} and the second inequality in \eqref{eq:<ep}, we have $|\tf-f|\le\vp$ and hence $|\int_0^1\tf(u)\,du-\int_0^1 f(u)\,du|\le\vp$. Collecting now \eqref{eq:y_n=}, \eqref{eq:I_n to}, \eqref{eq:<ep}, \eqref{eq:J-J}, and \eqref{eq:J_n to}, we see that 
\begin{equation*}
	\limsup_{n\to\infty}\Big|y_n-\int_0^1 f(u)\,du\Big|\le3\vp,
\end{equation*}
for any real $\vp>0$, which completes the proof of Proposition \ref{prop:abs cont}. 
\end{proof}

\begin{proof}[Proof of Proposition \ref{prop:uniq}]
The Stolz--Ces\`aro theorem (\cite[pages~173--175]{stolz} and \break 
\cite[page~54]{cesaro}) states the following: if $(a_n)$ and $(b_n)$ are sequences of real numbers such that $b_n$ is strictly increasing to $\infty$ and $\frac{a_{n+1}-a_n}{b_{n+1}-b_n}\longrightarrow\ell\in\R$, then $\frac{a_n}{b_n}\longrightarrow\ell$. 
Now Proposition \ref{prop:uniq} follows immediately by applying the Stolz--Ces\`aro theorem with $a_n=x_n$ and $b_n=n$, since $\frac{x_n}{n}\longrightarrow\int_0^1 f(x)\,dx$. 
\end{proof}


\bibliographystyle{abbrv}

\bibliography{P:/pCloudSync/mtu_pCloud_02-02-17/bib_files/citations12.13.12}
%
%
%
%
%
%
%
%

\end{document}